\documentclass[12pt, a4paper]{article}

\usepackage{amssymb}
\usepackage{amsmath}
\usepackage{amsthm}

  \newtheorem{theorem}{Theorem}

    \newtheorem{lemma}{Lemma}    
    \newtheorem{proposition}{Proposition} 
    
      \newcommand{\dy}{\,\mathrm{d}y}

\newcommand{\ds}{\,\mathrm{d}s}
\newcommand{\dt}{\,\mathrm{d}t}
\newcommand{\dsig}{\,\mathrm{d}\sigma}

    \newcommand\vN{{\bf  \nabla}}
    \newcommand\ddh{--}
    \DeclareMathOperator{\Div}{div}
    
\begin{document}

\title{A short note on maximal regularity for the heat kernel in Besov spaces and a degenerate 3D Keller\ddh Segel system.}
\author{  Pierre Gilles Lemari\'e-Rieusset\footnote{LaMME, Univ Evry, CNRS, Universit\'e Paris-Saclay, 91025, Evry, France}\footnote{ emeritus professor, (Univ Evry)} \footnote{e-mail : pierregilles.lemarierieusset@univ-evry.fr}}
\date{}\maketitle

\begin{abstract} We prove global existence  for a degenerate Keller\ddh Segel equation with small initial values in a critical Lorentz space. \\
{\bf Keywords : }  Keller\ddh Segel equations, Lorentz spaces, Besov spaces, maximal regularity,  global in time solutions\\
{\bf 2020 Mathematics Subject Classification : }  35K55, 35K08.
\end{abstract}
    
    We discuss the following partial differential system  on $(0,+\infty)\times\mathbb{R}^3$
    \begin{equation}\label{Keller1}\left\{ \begin{split} \partial_t u  =&\Delta u-\Div (u\vN \varphi)\\
    \partial_t \varphi=&\epsilon \Delta\varphi +u\\ u(0,.)=& u_0
    \\ \varphi(0,.)=&0\end{split}\right.\end{equation} where $\epsilon\geq 0$. For $\epsilon>0$, this is called the parabolic-parabolic Keller\ddh Segel system. The case $\epsilon=0$ can be seen as a degeneracy of  the parabolic-parabolic Keller\ddh Segel system.
    
    Using Duhamel's formula for the heat equation, (\ref{Keller1}) is rewritten as    \begin{equation}\label{Keller2}\left\{ \begin{split}   u  =&e^{t\Delta} u_0-\int_0^t e^{(t-s)\Delta}\Div (u\vN \varphi) \ds \\
      \varphi=  &\int_0^t   e^{\epsilon(t-s)\Delta} u\ds \end{split}\right.\end{equation}  and finally we get that
      \begin{equation}\label{Keller3}
      u= e^{t\Delta} u_0-\int_0^t e^{(t-s)\Delta}\Div \left(u \int_0^s   e^{\epsilon(s-\sigma)\Delta} \vN u\dsig \right) \ds . \end{equation}
      
      As usual, mild solutions are described of fixed points of the transform
      \[ u\mapsto e^{t\Delta}u_0-B_\epsilon(u,u)\] with \[ B_\epsilon(u,v)= \int_0^t e^{(t-s)\Delta}\Div \left(u \int_0^s   e^{\epsilon(s-\sigma)\Delta} \vN v\dsig \right) \ds .\] When $\epsilon=0$, $B_0$ is given by \[ B_0(u,v)= \int_0^t e^{(t-s)\Delta}\Div \left(u \int_0^s    \vN v\dsig \right) \ds .\] 
      The idea for solving (\ref{Keller3}) is then to identify a Banach space $\mathbb{X}$ of distributions on $\mathbb{R}^3$ and a Banach space $ \mathbb{Y}$ of functions on $(0,+\infty)\times\mathbb{R}^3$ such that:
      \begin{itemize} \item[$\bullet$] for $u_0\in \mathbb{X}$, $e^{t\Delta}u_0\in \mathbb{Y}$ and $\| e^{t\Delta}u_0\|_{\mathbb{Y}} \leq C_{\mathbb{X},\mathbb{Y}} \|u_0\|_{\mathbb{X}}$.
  \item[$\bullet$]
for $(u, v)\in\mathbb{Y}\times\mathbb{Y}$, $B_\epsilon(u,v)\in \mathbb{Y}$ and $\| B_\epsilon(u,v)\|_{\mathbb{Y}} \leq C_{\epsilon,\mathbb{Y}} \|u\|_{\mathbb{Y}}  \|v\|_{\mathbb{Y}}$. 
      \end{itemize}  Then, for small $u_0\in\mathbb{X}$, i. e. for $u_0$ such that
\[ \|u_0\|_\mathbb{X}<\frac 1{ 4 C_{\mathbb{X},\mathbb{Y}}C_{\epsilon,\mathbb{Y}}} ,\] the Picard iterates $w_n$ defined by
\[ w_0=0\text{ and } w_{n+1}= e^{t\Delta}u_0-B_\epsilon(w_n,w_n)\] converge in $\mathbb{Y}$ to a solution $u$ such that \[ \|u\|_{\mathbb{Y}} <\frac 1{ 2  C_{\epsilon,\mathbb{Y}}}   .\]

    If $(u,\varphi)$ is a solution of (\ref{Keller1})  with initial value $u_0$ and if $\lambda>0$ and $x_0\in\mathbb{R}^3$, then, defining  \[u_\lambda(t,x)=\lambda^2 u(\lambda^2 t,\lambda(x-x_0)) \text{ and } \varphi_\lambda(t,x)=\varphi(\lambda^2 t,\lambda(x-x_0)) ,\] we easily check that $(u_\lambda,\varphi_\lambda)$ is a solution of the system (\ref{Keller1})  with initial value $\lambda^2 u_0(\lambda (x-x_0))$. Looking for global solutions, one usually deals with small initial values in a Banach space $\mathbb{X}$ which satisfies the criticality condition 
    \[ \lambda^2 \| u_0(\lambda (x-x_0))\|_\mathbb{X}=\|u_0\|_\mathbb{X}.\] 
    
    The case $\epsilon>0$ has been studied in many such frames, for instance Lebesgue  space $\mathbb{X}=L^{\frac 3 2}$ (Calvez, Corrias and Ebde \cite{Calv12}), Lorentz space $\mathbb{X}=L^{\frac 3 2,\infty}$ (Kozono and Sugiyama \cite{Kozo08}), Besov space $\mathbb{X}=\dot B^{-\frac 1 2}_{2,\infty}$ (Iwabuchi \cite{Iwab11}), Morrey space $\mathbb{X}=\dot M^{1,\frac 3 2}$ (Lemari\'e-Rieusset \cite{Lema13}).  [Recall that, for $1\leq q\leq +\infty$, $1\leq r\leq +\infty$ and $\gamma\in\mathbb{R}$ with $\gamma<\frac 3 q$ if $r>1$ or $\gamma\leq \frac 3 q$ if $r=1$, the homogeneous Besov space $\dot B^\gamma_{q,r}(\mathbb{R}^3)$ is characterized by the Littlewood\ddh Paley decomposition
    \[ u\in \dot B^\gamma_{q,r} \iff u=\sum_{j\in\mathbb{Z}}\Delta_j u \text{ in }\mathcal{S}'\text{ and } (2^{j\gamma} \|\Delta_ju\|_q)_{j\in\mathbb{Z}}\in l^r,\]   (see Lemari\'e-Rieusset \cite{Lema02} for instance).]
    
    In section \ref{Sect1}, we recall those results in a simple way:
    
    \begin{proposition} \label{prop1}  Let $\epsilon>0$ and $\frac 1 4<\alpha<\frac 1 2$.  Consider the equation  
      \begin{equation}\label{Keller4}
      u= e^{t\Delta} u_0-\int_0^t e^{(t-s)\Delta}\Div \left(u \int_0^s   e^{\epsilon(s-\sigma)\Delta} \vN u\dsig \right) \ds . \end{equation}There exists a constant $C_{\alpha}$ such that: \begin{itemize}
      \item[$\bullet$] if $u_0\in L^{3/2}$ and $\|u_0\|_{3/2} <C_{\alpha} \sqrt\epsilon$ , then (\ref{Keller4}) has a global solution $u$ such that \[u\in L^\infty((0,+\infty), L^{3/2}) \text{ and }\sup_{t>0} t^{1-\alpha} \|u(t,.)\|_{\frac 3{2\alpha}}<+\infty \]
      \item[$\bullet$]  if $u_0\in L^{3/2,\infty}$ and $\|u_0\|_{L^{3/2,\infty}} <C_{\alpha} \sqrt\epsilon$, then (\ref{Keller4}) has a global solution $u$ such that \[u\in L^\infty((0,+\infty), L^{3/2,\infty}) \text{ and }\sup_{t>0} t^{1-\alpha} \|u(t,.)\|_{L^{\frac3{2\alpha},\infty}}<+\infty\]
      \item[$\bullet$]  if $u_0\in \dot B^{-\frac 1 2}_{2,\infty}$  and $\|u_0\|_{ \dot B^{-\frac 1 2}_{2,\infty}} <C_{\alpha} \sqrt\epsilon$, then (\ref{Keller4}) has a global solution $u$ such that \[u\in L^\infty((0,+\infty), \dot B^{-\frac 1 2}_{2,\infty}) \text{ and }\sup_{t>0}t^{1-\alpha}   \|u(t,.)\|_{\dot B^{\frac 3 2-2\alpha}_{2,\infty}}<+\infty.\]
      \end{itemize}
    \end{proposition}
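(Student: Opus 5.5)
We prove the three statements with one Kato-type scheme in the framework recalled before the statement. We take $\mathbb{Y}$ to be the space of $u$ on $(0,+\infty)\times\mathbb{R}^3$ with
\[ \|u\|_{\mathbb{Y}}=\sup_{t>0} t^{1-\alpha}\|u(t,.)\|_{E_\alpha}<+\infty, \]
where $E_\alpha=L^{\frac 3{2\alpha}}$, $L^{\frac 3{2\alpha},\infty}$ or $\dot B^{\frac 3 2-2\alpha}_{2,\infty}$ respectively. In each case $E_\alpha$ has the same scaling as $L^{3/(2\alpha)}$, so $\mathbb{Y}$ is critical. The bound $\|e^{t\Delta}u_0\|_{\mathbb{Y}}\leq C\|u_0\|_{\mathbb{X}}$ is standard: use the $L^p\to L^q$ heat estimates, interpolated to Lorentz spaces by O'Neil/real interpolation, or, in the Besov case, the estimate $\|\Delta_j e^{t\Delta}f\|_2\leq Ce^{-ct4^j}\|\Delta_j f\|_2$, which gives the gain of $2\alpha-1+1=2-2\alpha$ derivatives at cost $t^{-(1-\alpha)}$. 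The factor $\sqrt\epsilon$ in the smallness condition comes from showing $\|B_\epsilon(u,v)\|_{\mathbb{Y}}\leq C_\alpha\epsilon^{-1/2}\|u\|_{\mathbb{Y}}\|v\|_{\mathbb{Y}}$ and then applying the Picard argument.

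\textbf{Step 1: the inner term $\vN\varphi$.} Set $w(s)=\int_0^s e^{\epsilon(s-\sigma)\Delta}\vN v\,d\sigma$. Since $e^{\epsilon\tau\Delta}\vN$ maps $E_\alpha$ into the space $F$ with one more derivative of smoothness at cost $(\epsilon\tau)^{-1/2}$, we get
\[ \|w(s)\|_F\leq C\int_0^s (\epsilon(s-\sigma))^{-\frac12}\sigma^{\alpha-1}\,d\sigma\,\|v\|_{\mathbb{Y}}=C\epsilon^{-\frac12}s^{\alpha-\frac12}\|v\|_{\mathbb{Y}}. \]
Here $F$ is $L^{\frac 3{2\alpha-1}}$ (Lorentz or Lebesgue; this requires $\alpha>\frac12$ for the Sobolev step). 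Because we are in the range $\alpha<\frac12$, it is better to keep $F$ with negative-index scaling and lose less. So we use $e^{\epsilon\tau\Delta}\vN:L^{p}\to L^{q}$ with $\frac1p-\frac1q=\frac{2\alpha}{3}-\frac{\theta}{3}$, at cost $\tau^{-\frac12-\frac\theta2}$, keeping the $\sigma$- and $(s-\sigma)$-integrals convergent. The key point is that $\frac 1 4<\alpha<\frac12$ leaves room to choose exponents with both singularities integrable: $\alpha>0$ at $\sigma=0$, and exponent $<1$ at $\sigma=s$.

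\textbf{Step 2: the product and the outer heat operator.} By Hölder (O'Neil in the Lorentz case), $u\,w(s)$ lies in a Lebesgue/Lorentz space with norm $\lesssim \epsilon^{-1/2}s^{2\alpha-\frac32}\|u\|_{\mathbb{Y}}\|v\|_{\mathbb{Y}}$, up to the exponent bookkeeping fixed in Step 1. Then $e^{(t-s)\Delta}\Div$ maps this back to $E_\alpha$ at cost $(t-s)^{-\frac12-\beta}$. The exponents combine to $\int_0^t(t-s)^{-a}s^{-b}ds=Ct^{\alpha-1}$ by scaling. Convergence needs $a<1$ and $b<1$, and this is exactly where $\alpha>\frac14$ enters: the product $u\cdot w$ must land in a space $L^r$ with $r\geq1$ (or $>1$). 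Concretely, $\frac{2\alpha}{3}+\frac{2\alpha-1}{3}\leq 1$ always holds, while $b=\frac32-2\alpha<1$ forces $\alpha>\frac14$. For the Besov case we do the same with a paraproduct decomposition of $u\,w$, estimated in $\dot B^{s}_{1,\infty}$ or $\dot B^{s}_{2,\infty}$ type spaces via Bernstein. This needs positive total regularity for the remainder term; the condition $\alpha<\frac12$ ensures the regularity $\frac32-2\alpha>\frac12$ is positive and below $\frac32$.

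\textbf{Step 3: the $L^\infty_t\mathbb{X}$ bound.} Once $u\in\mathbb{Y}$, we re-estimate $B_\epsilon(u,u)$ directly in $\mathbb{X}$: $e^{(t-s)\Delta}\Div$ maps the product space into $\mathbb{X}$ with a time integral that is again scale-invariant and convergent. The Lorentz and Besov cases are the main obstacle, since the endpoint $L^\infty$ time integrability is lost. For $L^{3/2,\infty}$ we use that all estimates are true $L^p$–$L^q$ bounds and interpolate. For $\dot B^{-1/2}_{2,\infty}$ we estimate each dyadic block separately, using $e^{-c(t-s)4^j}$ to make $\int_0^t$ summable uniformly in $j$. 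I expect this blockwise endpoint bound, together with the paraproduct remainder in Step 2, to be the delicate part.
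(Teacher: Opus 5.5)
\textbf{Gap: the a posteriori bound of Step 3 fails in the Lebesgue and Lorentz cases.}

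Your Kato part is fine. With the single norm $\sup_{t>0}t^{1-\alpha}\|u(t,.)\|_{E_\alpha}$, Steps 1--2 give $\|B_\epsilon(u,v)\|_{\mathbb{Y}}\leq C_\alpha\epsilon^{-1/2}\|u\|_{\mathbb{Y}}\|v\|_{\mathbb{Y}}$, with the same time integral $\int_0^t(t-s)^{-\frac12-\alpha}s^{2\alpha-\frac32}\ds$ as the paper. In Step 1 simply take $F=E_\alpha$ and use $\|e^{\epsilon\tau\Delta}\vN f\|_{E_\alpha}\leq C(\epsilon\tau)^{-1/2}\|f\|_{E_\alpha}$. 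The detour through $L^{\frac3{2\alpha-1}}$ makes no sense for $\alpha<\frac12$ and is not needed.

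The problem is Step 3. From $u,v\in\mathbb{Y}$ alone you control $u(s)$ and $w(s)$ only in spaces $L^q$ with $q\geq\frac3{2\alpha}$, because the heat semigroup raises integrability and never lowers it. Hence $u\,w(s)$ lies only in $L^r$ with $r\geq\frac3{4\alpha}>\frac32$. But $e^{\tau\Delta}\Div$ is not bounded from $L^r$ to $L^{3/2}$ (or from $L^{r,\infty}$ to $L^{3/2,\infty}$) when $r>\frac32$.

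This is not just bookkeeping: the bound $\sup_t\|B_\epsilon(u,v)(t,.)\|_{3/2}\lesssim\|u\|_{\mathbb{Y}}\|v\|_{\mathbb{Y}}$ is false. Test it on $u=v=s^{\alpha-1}\sum_{k=1}^N g(x-x_k)$ with far-apart $x_k$. At a fixed time the left side grows like $N^{2/3}$ while the right side grows like $N^{4\alpha/3}$, and $\alpha<\frac12$.

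\textbf{How to fix it.} You need to put $u(s)\in\mathbb{X}$ itself into the product:
$\|u\,w(s)\|_{\frac3{2+2\alpha}}\leq\|u(s)\|_{\frac32}\|w(s)\|_{\frac3{2\alpha}}$. Then $e^{\frac{t-s}2\Delta}$ maps $L^{\frac3{2+2\alpha}}$ to $L^{\frac32}$ at cost $(t-s)^{-\alpha}$. This gives $\int_0^t(t-s)^{-\frac12-\alpha}s^{\alpha-\frac12}\ds<+\infty$, which is where $\alpha<\frac12$ is used. The Lorentz case is the same, with O'Neil's inequality in place of H\"older.

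This bound is linear in $\sup_s\|u(s,.)\|_{\mathbb{X}}$, a quantity you do not know in advance to be finite for your Kato solution. So it cannot be applied after the fact. You must add $\sup_t\|u(t,.)\|_{\mathbb{X}}$ to the norm and run the Picard scheme in the intersection space, exactly as the paper does. Alternatively, propagate a uniform $\mathbb{X}$ bound along the Picard iterates.

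\textbf{Where the difficulty actually lies.} Your expectation is reversed. In the Besov case the a posteriori argument does work. The product law gives $u\,w(s)\in\dot B^{\frac32-4\alpha}_{2,\infty}$ with norm $\lesssim\epsilon^{-1/2}s^{2\alpha-\frac32}$. Since $4\alpha-1>0$, $e^{\tau\Delta}\Div$ maps this space into $\dot B^{-\frac12}_{2,\infty}$ at the integrable cost $\tau^{\frac12-2\alpha}$. The obstruction is integrability, and only the Lebesgue and Lorentz scales see it.
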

    
    Proposition \ref{prop1} shows the role of the relaxation parameter $\epsilon$: the larger it is, the easier it is to get a global solution. However, we shall be interested in smaller $\epsilon$'s, and even in the case $\epsilon=0$. (The case $\epsilon=0$ was a question I was asked by my colleague Nicolas Meunier.) Our main result is the following one (proved in Section \ref{Sect2}):
    
    \begin{theorem} \label{theo1}  Let $\epsilon\geq 0$.  Consider the equation  
      \begin{equation}\label{Keller5}
      u= e^{t\Delta} u_0-\int_0^t e^{(t-s)\Delta}\Div \left(u \int_0^s   e^{\epsilon(s-\sigma)\Delta} \vN u\dsig \right) \ds . \end{equation}There exists a positive constant $C_0$ (which doesn't depend on $\epsilon$) such that,  if $u_0\in L^{3/2,1}$ and $\|u_0\|_{L^{3/2,1}} <C_0 $ , then (\ref{Keller5}) has a global solution $u$ such that \[u\in L^\infty((0,+\infty), L^{3/2,\infty})  \cap L^1((0,+\infty), \dot B^{3/2}_{2,1}). \]       \end{theorem}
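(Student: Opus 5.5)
We will run the Picard scheme described in the introduction with
\[ \mathbb{X}=L^{3/2,1},\qquad \mathbb{Y}=L^\infty((0,+\infty),L^{3/2,\infty})\cap L^1((0,+\infty),\dot B^{3/2}_{2,1}),\]
so that the whole proof reduces to the two bounds listed there, with a bilinear constant $C_{\epsilon,\mathbb{Y}}$ that does not depend on $\epsilon\ge0$.

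\textbf{Step 1: the free evolution.} We have $L^{3/2,1}\subset L^{3/2,\infty}$, and the heat semigroup is uniformly bounded on $L^{3/2,\infty}$. The $L^1_t\dot B^{3/2}_{2,1}$ bound comes from the embedding $L^{3/2,1}\subset \dot B^{-1/2}_{2,1}$. To get it, interpolate (real method, second index $1$) the Sobolev embeddings $L^{p}\subset \dot B^{3/2-3/p}_{2,2}$ for $p$ on either side of $3/2$, or argue by duality with $\dot B^{1/2}_{2,\infty}\subset L^{3,\infty}$. Maximal regularity for the heat kernel in Besov spaces with third index $1$ then gives
\[ \int_0^\infty\|e^{t\Delta}f\|_{\dot B^{s+2}_{2,1}}\dt\le C\|f\|_{\dot B^{s}_{2,1}}. \]
This holds because $\sum_j 2^{j(s+2)}\int_0^\infty e^{-ct2^{2j}}\dt\,\|\Delta_jf\|_2\lesssim\sum_j2^{js}\|\Delta_jf\|_2$. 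Taking $s=-1/2$ gives the bound.

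\textbf{Step 2: the bilinear estimate.} Set $W=\int_0^s e^{\epsilon(s-\sigma)\Delta}\vN v\dsig$. The key observation is that, for every $\epsilon\ge0$,
\[ \|W(s)\|_{\dot B^{1/2}_{2,1}}\le\int_0^\infty\|v(\sigma)\|_{\dot B^{3/2}_{2,1}}\,\mathrm{d}\sigma\le\|v\|_{\mathbb{Y}}, \]
since $e^{\epsilon\tau\Delta}$ is a contraction on each dyadic block. The same argument gives $W\in L^\infty_t L^\infty_x$, because $\dot B^{3/2}_{2,1}\subset L^\infty$ and the heat semigroup contracts $L^\infty$. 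This is where $\epsilon$ disappears and why the endpoint index $1$ is needed.

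We then need two estimates for $\int_0^te^{(t-s)\Delta}\Div(uW)\ds$.
\begin{itemize}
\item[(a)] The $L^\infty_tL^{3/2,\infty}$ bound. Here $uW\in L^1_tL^{3/2,\infty}$, because $u\in L^1_t\dot B^{3/2}_{2,1}\subset L^1_tL^\infty$ and $W\in L^\infty_t\dot B^{1/2}_{2,1}\subset L^\infty_tL^{3,2}$. Alternatively, $u\in L^\infty_t L^{3/2,\infty}$ times $W\in L^\infty_x$ gives only $L^\infty_t$, and one must then interpolate. Integrability in time has to be traded against the derivative $\Div$, so we would aim for a Lorentz-space maximal regularity statement: $\int_0^t e^{(t-s)\Delta}\Div F\ds$ maps $L^1_t\dot B^{-1/2}_{2,1}$-type spaces into $L^\infty_t \dot B^{-1/2}_{2,1}$. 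We then use $\dot B^{-1/2}_{2,1}\subset\dot B^{-1/2}_{2,\infty}$, and finally go back to $L^{3/2,\infty}$. The last step may fail, since $\dot B^{-1/2}_{2,\infty}\not\subset L^{3/2,\infty}$. In that case we would estimate the product directly: $uW\in L^1_tL^{3/2,\infty}$ (in fact $L^{3/2,1}$), and the $\Div$ is handled by the kernel bound $\|\vN e^{\tau\Delta}\|_{L^{3/2,\infty}\to L^{3/2,\infty}}\lesssim\tau^{-1/2}$. This last route is not summable, so we expect to need a genuinely mixed argument.
\item[(b)] The $L^1_t\dot B^{3/2}_{2,1}$ bound. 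By maximal regularity this reduces to $\Div(uW)\in L^1_t\dot B^{-1/2}_{2,1}$, that is $uW\in L^1_t\dot B^{1/2}_{2,1}$. Using the Bony paraproduct and the algebra-type estimate $\|fg\|_{\dot B^{1/2}_{2,1}}\lesssim\|f\|_{\dot B^{1/2}_{2,1}}\|g\|_{L^\infty}+\|f\|_{L^\infty}\|g\|_{\dot B^{1/2}_{2,1}}$, we can pair $u\in L^1\dot B^{3/2}_{2,1}$ against $W\in L^\infty\dot B^{1/2}_{2,1}$. This requires $u\in L^1_t\dot B^{1/2}_{2,1}\cap L^1_tL^\infty$, which is not available directly. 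We would therefore prove a refined product law
\[ \|fg\|_{\dot B^{1/2}_{2,1}}\lesssim\|f\|_{\dot B^{3/2}_{2,1}}\|g\|_{L^{3,\infty}}+\|g\|_{\dot B^{1/2}_{2,1}}\|f\|_{L^{3/2,\infty}\cap\dot B^{3/2}_{2,1}}, \]
using $\dot B^{3/2}_{2,1}\subset L^\infty$ on the low-frequency paraproduct factor and $L^{3/2,\infty}$ with Bernstein on the remaining terms.
\end{itemize}

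\textbf{Main obstacle.} Step 2(a), bounding the Duhamel term in $L^\infty L^{3/2,\infty}$ uniformly in $\epsilon$, is the delicate point. The plan is to write $\Div(uW)=W\cdot\vN u+u\,\Div W$. By the equation for $\varphi$, $\Div W=\int_0^s e^{\epsilon(s-\sigma)\Delta}\Delta v$, and its $\dot B^{-1/2}_{2,1}$ norm is again controlled by $\|v\|_{L^1\dot B^{3/2}_{2,1}}$. We would then apply the boundedness of $\int_0^te^{(t-s)\Delta}\Div\,\cdot\,\ds$ from $L^1_tL^{3/2,\infty}$-type spaces using Lorentz interpolation between two Lebesgue exponents. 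The third index $1$ of the initial data is what makes all the endpoint embeddings work.

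Once Steps 1 and 2 hold, the smallness condition $\|u_0\|_{L^{3/2,1}}<C_0$ with $C_0=1/(4C_{\mathbb{X},\mathbb{Y}}C_{\mathbb{Y}})$ yields the global solution by the Picard argument of the introduction.
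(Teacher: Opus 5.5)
\textbf{The gap is Step 2(a).} Your Step 1 and the $\epsilon$-uniform bound $\|W\|_{L^\infty_t\dot B^{1/2}_{2,1}}\le\|v\|_{L^1_t\dot B^{3/2}_{2,1}}$ are right. Part (b) closes with the plain product law $\|uW\|_{\dot B^{1/2}_{2,1}}\le C\|u\|_{\dot B^{3/2}_{2,1}}\|W\|_{\dot B^{1/2}_{2,1}}$, proved by paraproducts: $S_ju$ is estimated in $\dot B^0_{\infty,1}\supset\dot B^{3/2}_{2,1}$, and $S_jW$ in $\dot B^{-1}_{\infty,1}\supset\dot B^{1/2}_{2,1}$ at the cost of $2^j$ on $\Delta_ju$. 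Your ``refined'' law is unnecessary. As written, its second term would need $u\in L^1_tL^{3/2,\infty}$ after integrating in time, which you do not have.

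Step 2(a) is never established, and the routes you sketch fail:
\begin{itemize}
\item $W$ is not bounded in $x$. It is built from $\vN v\in\dot B^{1/2}_{2,1}\subset L^{3,1}$, not from $v$. So $u\in L^1_tL^\infty$ times $W\in L^\infty_tL^{3,2}$ lands in $L^1_tL^{3,2}$, not in $L^1_tL^{3/2,\infty}$.
\item Writing $\Div(uW)=W\cdot\vN u+u\,\Div W$ is hopeless. $\Div W$ lies only in $\dot B^{-1/2}_{2,1}$, which is contained in no Lorentz space (test $e^{iNx_1}\phi(x)$ as $N\to\infty$).
\item The Besov detour fails, as you note yourself, since $\dot B^{-1/2}_{2,\infty}\not\subset L^{3/2,\infty}$.
\end{itemize}

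\textbf{The missing idea} is to use the $L^\infty_tL^{3/2,\infty}$ norm of $u$ itself and put the product in $L^1_x$. By $\dot B^{1/2}_{2,1}\subset L^{3,1}$ and H\"older's inequality in Lorentz spaces, $\|uW\|_1\le C\|u\|_{L^{3/2,\infty}}\|W\|_{L^{3,1}}$. Hence $f=uW$ satisfies $\|f\|_{L^\infty_tL^1}\le C\|u\|_{L^\infty L^{3/2,\infty}}\|v\|_{L^1\dot B^{3/2}_{2,1}}$. This is the ``mixed'' estimate you were looking for. Your alternative using $u\in L^\infty_tL^{3/2,\infty}$ was the right start, but $W$ must be taken in $L^{3,1}$, not in $L^\infty$.

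One then needs $\|\int_0^te^{(t-s)\Delta}\Div f\ds\|_{L^{3/2,\infty}}\le C\|f\|_{L^\infty_tL^1}$. Here $\|e^{\tau\Delta}\Div\|_{L^1\to L^{3/2,\infty}}\approx\tau^{-1}$, which is exactly the non-integrable singularity you ran into. It is removed by real interpolation rather than by the triangle inequality. Split the time integral at $s=t-A$:
\begin{itemize}
\item the piece $s>t-A$ is bounded in $L^1$ by $C\sqrt A\,\|f\|_{L^\infty L^1}$, using the kernel bound $\tau^{-1/2}$;
\item the piece $s<t-A$ is bounded in $L^3$ by $CA^{-1/2}\|f\|_{L^\infty L^1}$, using the kernel bound $\tau^{-3/2}$.
\end{itemize}
Since $A>0$ is arbitrary, the $K$-functional gives membership in $(L^1,L^3)_{1/2,\infty}=L^{3/2,\infty}$. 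Without this step, $\mathbb{Y}$ is not shown to be stable under $B_\epsilon$, and the Picard scheme does not close.
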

    
    \section{The case $\epsilon>0$}\label{Sect1}
    We now prove Proposition \ref{prop1}:
    \subsection*{Estimate on the heat kernel:}  The operator $e^{t\Delta}$ is given as the convolution with the heat kernel \[W_t(x)=\frac 1{(4\pi t)^{3/2}}e^{-\frac{\vert x\vert^2}{4t}} =\mathcal{F}^{-1}(e^{-t\vert\xi\vert^2})\]
    For $0<\beta<2$, the operator $(-\Delta)^{\beta/2}e^{t\Delta}$ is given as \[(-\Delta)^{\beta/2}e^{t\Delta}=\frac 1{\Gamma(1-\frac\beta 2)}\int_0^{+\infty}(-\Delta) e^{(t+s)\Delta}\frac{\ds}{s^{\frac \beta 2}} . \] It is a convolution with a kernel $K_{t,\beta}$ for which we have the estimate
    \[ \| K_{t,\beta}\|_1\leq \frac 1{\Gamma(1-\frac\beta 2)}\int_0^{+\infty} \|\Delta W_{t+s}\|_1 \frac{\ds}{s^{\frac \beta 2}} =\frac{\|\Delta W_1\|_1}{\Gamma(1-\frac\beta 2)}\int_0^{+\infty} \frac{\ds}{(t+s) s^{\beta/2}}\]
    and thus
     \[ \| K_{t,\beta}\|_1\leq  t^{-\frac \beta 2} \frac{\|\Delta W_1\|_1}{\Gamma(1-\frac\beta 2)}\int_0^{+\infty} \frac{\dsig}{(1+\sigma) \sigma^{\beta/2}}.\]
     
     In particular, if $0<\beta<2$ and   if $X_0$ and $X_\beta$ are two Banach spaces of distributions on $\mathbb{R}^3$ which are stable for convolution with $L^1$ functions ($\|f*u\|_{X_0}\leq \|f\|_1\|u\|_{X_0}$ and $\|f*u\|_{X _\beta}\leq \|f\|_1\|u\|_{X_\beta}$) and if $(-\Delta)^{-\beta/2}$ is a bounded operator from $X_0$ to $X_\beta$ ($\| \int \frac 1{\vert x-y\vert^{3-\beta}} u(y)  \dy\|_{X_\beta}\leq C_\beta \|u\|_{X_0}$), then we have the inequalities
     \begin{equation}  \|e^{t\Delta}u\|_{X_0}\leq \|u\|_{X_0 }\end{equation}and \begin{equation} \label{ineq1}\|e^{t\Delta}u\|_{X_\beta} \leq \|K_{t,\beta}\|_1 \|(-\Delta)^{-\beta/2} u\|_{X_\beta}\leq C_\beta t^{-\beta/2} \|u\|_{X_0}.\end{equation}
    \subsection*{Estimates on $e^{t\Delta} u_0$:}  
    Taking $X_0= L^{3/2}$, $\beta=2-2\alpha$ and $X_\beta=L^{\frac 3{2\alpha}}$, we get that
    \[ \sup_{t>0} \|e^{t\Delta}u_0\|_{\frac 3 2} +\sup_{t>0} t^{1-\alpha}\|e^{t\Delta}u_0\|_{\frac 3 {2\alpha}} \leq C_\alpha \|u_0\|_{\frac 3 2}.\]
     Similarly, taking  $X_0= L^{3/2,\infty}$, $\beta=2-2\alpha$ and $X_\beta=L^{\frac 3{2\alpha},\infty}$, we get that
    \[ \sup_{t>0} \|e^{t\Delta}u_0\|_{L^{3/2,\infty}} +\sup_{t>0} t^{1-\alpha}\|e^{t\Delta}u_0\|_{L^{\frac 3{2\alpha},\infty}} \leq C_\alpha \|u_0\|_{L^{3/2,\infty}}.\]
    Finally,   taking  $X_0=  \dot B^{-\frac 1 2}_{2,\infty}$, $\beta=2-2\alpha$ and $X_\beta= \dot B^{\frac 3 2-2\alpha}_{2,\infty}$, we get that
    \[ \sup_{t>0} \|e^{t\Delta}u_0\|_{ \dot B^{-\frac 1 2}_{2,\infty}} +\sup_{t>0} t^{1-\alpha}\|e^{t\Delta}u_0\|_{ \dot B^{\frac 3 2-2\alpha}_{2,\infty}} \leq C_\alpha \|u_0\|_{ \dot B^{-\frac 1 2}_{2,\infty}}.\]

    \subsection*{Estimate on $B_\epsilon(u,v)$ (case of Lebesgue space $L^{3/2}$):}  
    Let $\mathbb{Y}=\{u\in L^\infty((0,+\infty), L^{3/2})\ /\ \sup_{t>0} t^{1-\alpha} \|u(t,.)\|_{L^{\frac 3{2\alpha}}}<+\infty\}$ endowed with the norm
    \[ \|u\|_\mathbb{Y}=\sup_{t>0} \|u(t,.)\|_{3/2}+ \sup_{t>0} t^{1-\alpha} \|u(t,.)\|_{L^{\frac 3{2\alpha}}}.\]
    Let $u,v\in\mathbb{Y}$ and $w= B_\epsilon(u,v)= \int_0^t e^{(t-s)\Delta}\Div \left(u \int_0^s   e^{\epsilon(s-\sigma)\Delta} \vN v\dsig \right) \ds$. We have
    \[ \| \int_0^s   e^{\epsilon(s-\sigma)\Delta} \vN v\dsig \|_{\frac 3{2\alpha}}\leq C\int_0^s \frac 1{\sqrt{\epsilon (s-\sigma)}} \frac{\dsig}{\sigma^{1-\alpha}} \|v\|_{\mathbb{Y}}= C_\alpha\frac 1{ \sqrt\epsilon s^{\frac 1 2-\alpha}} \|v\|_\mathbb{Y}.\]
    We write 
    \[ \| B_\epsilon(u,v)(t,.)\|_{3/2}\leq C \int_0^t \frac 1{\sqrt{t-s}}\|e^{\frac{(t-s)}2 \Delta}(u \int_0^s   e^{\epsilon(s-\sigma)\Delta} \vN v\dsig )\|_{3/2}\ds\]
    and taking $X_0= L^{\frac 3{2+2\alpha}}$, $\beta=2\alpha $ and $X_\beta=L^{3/2}$, we get by (\ref{ineq1})
    \[ \|e^{\frac{(t-s)}2 \Delta}(u \int_0^s   e^{\epsilon(s-\sigma)\Delta} \vN v\dsig )\|_{3/2} \leq C_\alpha (t-s)^{-\alpha} \|u(s,.)\|_{3/2} \| \int_0^s   e^{\epsilon(s-\sigma)\Delta} \vN v\dsig \|_{\frac 3{2\alpha}} \] and thus
     \[ \| B_\epsilon(u,v)(t,.)\|_{3/2}\leq C_\alpha \frac 1{\sqrt\epsilon}\|u\|_\mathbb{Y} \|v\|_\mathbb{Y} \int_0^t \frac {\ds}{(t-s)^{\frac 1 2 +\alpha} s^{\frac 1 2-\alpha} }=C'_\alpha \frac 1{\sqrt\epsilon}\|u\|_\mathbb{Y} \|v\|_\mathbb{Y} . \]   Similarly, we write 
    \[ \| B_\epsilon(u,v)(t,.)\|_{\frac 3{2\alpha}}\leq C \int_0^t \frac 1{\sqrt{t-s}}\|e^{\frac{(t-s)}2 \Delta}(u \int_0^s   e^{\epsilon(s-\sigma)\Delta} \vN v\dsig )\|_{\frac 3{2\alpha}}\ds\]
    and taking $X_0= L^{\frac 3{4\alpha}}$, $\beta=2\alpha $ and $X_\beta=L^{\frac 3{2\alpha}} $, we get by (\ref{ineq1})
    \[ \|e^{\frac{(t-s)}2 \Delta}(u \int_0^s   e^{\epsilon(s-\sigma)\Delta} \vN v\dsig )\|_{\frac 3{2\alpha}}  \leq C_\alpha (t-s)^{-\alpha} \|u(s,.)\|_{\frac 3{2\alpha}}  \| \int_0^s   e^{\epsilon(s-\sigma)\Delta} \vN v\dsig \|_{\frac 3{2\alpha}} \] and thus (since $\int_0^1  \frac {\dsig}{(1-\sigma)^{\frac 1 2 +\alpha} \sigma^{\frac 3 2-2\alpha} }<+\infty$ for $1/4<\alpha<1/2$)
     \[ \| B_\epsilon(u,v)(t,.)\|_{\frac 3{2\alpha}} \leq C_\alpha \frac 1{\sqrt\epsilon}\|u\|_\mathbb{Y} \|v\|_\mathbb{Y} \int_0^t \frac {\ds}{(t-s)^{\frac 1 2 +\alpha} s^{\frac 3 2-2\alpha} }=C'_\alpha t^{\alpha-1} \frac 1{\sqrt\epsilon}\|u\|_\mathbb{Y} \|v\|_\mathbb{Y} . \]
     We conclude that $\| B_\epsilon(u,v)\|_\mathbb{X}\leq C_\alpha \frac 1{ \sqrt\epsilon} \|u\|_{\mathbb{X}}\|v\|_\mathbb{X}$ and Proposition 1 is proved in the case $u_0\in L^{3/2}$.
     
    \subsection*{Estimate on $B_\epsilon(u,v)$ (case of Lorentz space $L^{3/2,\infty}$):}  
    The proof for $u_0\in L^{3/2}$ can easily be adapted to the case $u_0\in L^{3/2,\infty}$, replacing $L^{3/2}$ with $L^{3/2,\infty}$, $L^{\frac 3{2\alpha}}$ with $L^{\frac 3{2\alpha},\infty}$, $L^{\frac 3{2+2\alpha}} $ with $L^{\frac 3{2+2\alpha},\infty}$, and $L^{\frac 3{4\alpha}}$ with $L^{\frac 3{4\alpha},\infty}$.
     
    \subsection*{Estimate on $B_\epsilon(u,v)$ (case of Besov space $ \dot B^{-\frac 1 2}_{2,\infty}$):}  
    The proof for $u_0\in L^{3/2}$ can easily be adapted to the case $u_0\in  \dot B^{-\frac 1 2}_{2,\infty}$, replacing $L^{3/2}$ with $ \dot B^{-\frac 1 2}_{2,\infty}$, $L^{\frac 3{2\alpha}}$ with $ \dot B^{\frac 3 2-2\alpha}_{2,\infty}$, $L^{\frac 3{2+2\alpha}} $ with $ \dot B^{-\frac 1 2-2\alpha}_{2,\infty}$, and $L^{\frac 3{4\alpha}}$ with $ \dot B^{\frac 3 2-4\alpha}_{2,\infty}$.
    
    The key ingredient for dealing with the non-linearity $u \int_0^t e^{\epsilon(t-s)\Delta}\vN v\ds$ is the product law in Besov spaces: for $\gamma<3/2$, $\delta<3/2$ and $\gamma+\delta>0$, we have
    \begin{equation}\label{product1} \|uw\|_{\dot B^{\gamma+\delta-\frac 3 2}_{2,\infty}} \leq C \|u\|_{\dot B^\gamma_{2,\infty} }\|w\|_{\dot  B^\delta_{2,\infty} } .\end{equation}
    
    \section{The case $\epsilon\geq 0$}\label{Sect2}
    
The case $\epsilon=0$ will be dealt with by using limit cases of the maximal regularity of the heat kernel.

  \section*{Maximal regularity of the heat kernel} 
We consider the operator $u\mapsto \mathcal{L}(u)$ defined by
\[ \mathcal{L}(u)(t,.)=\int_0^t   e^{(t-s)\Delta} \Delta u\ds.\]  The $L^pL^q$ maximal regularity of the heat kernel states that  $\mathcal{L}$ is bounded  on $L^p(0,+\infty[, L^q(\mathbb{R}^3))$ for $1<p<+\infty$ and $1<q<+\infty$. In the limit case $p=1$, one replaces $L^q$ with the  Besov space $\dot B^0_{q,1} $, and in the  limit case  $p=+\infty$ one replaces $L^q$ with $\dot B^0_{q,\infty}$.
\begin{lemma} For $1<q<+\infty$, we have the inequalities
\[ \int_0^{+\infty} \| \mathcal{L}(u)(t,.)\|_{\dot B^0_{q,1} }  \dt \leq C_q  \int_0^{+\infty} \| u(t,.)\|_{\dot B^0_{q,1} } \dt \] and 
\[ \sup_{0<t<+\infty} \|\mathcal{L}(u)(t.)\|_{\dot B^0_{q,\infty} }  \leq C_q \sup_{0<t<+\infty}  \| u(t,.)\|_{\dot B^0_{q,1\infty }} . \] 
\end{lemma}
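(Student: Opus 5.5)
The plan is to work dyadic block by dyadic block, using the Littlewood\ddh Paley decomposition that defines $\dot B^0_{q,r}$. The operator $\mathcal{L}$ commutes with the Littlewood\ddh Paley projections $\Delta_j$, so $\Delta_j\mathcal{L}(u)=\mathcal{L}(\Delta_j u)$. The key estimate is then, for a function $f$ whose Fourier transform is supported in the annulus $2^{j-1}\leq\vert\xi\vert\leq 2^{j+1}$ and for $1\leq q\leq+\infty$,
\[ \|e^{t\Delta}\Delta f\|_q\leq C\,2^{2j}e^{-c t 2^{2j}}\|f\|_q .\]
To prove it, write $e^{t\Delta}\Delta\Delta_j u= e^{t\Delta}\Delta\tilde\Delta_j\Delta_j u$, where $\tilde\Delta_j$ is a slightly enlarged frequency cut-off equal to $1$ on the support of the symbol of $\Delta_j$. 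The operator $e^{t\Delta}\Delta\tilde\Delta_j$ is convolution with $2^{3j}g_{t2^{2j}}(2^j x)\,2^{2j}$, where $g_\tau=\mathcal{F}^{-1}(-\vert\xi\vert^2e^{-\tau\vert\xi\vert^2}\tilde\psi(\xi))$ and $\tilde\psi$ is smooth and supported in an annulus. By scaling, it suffices to show $\|g_\tau\|_1\leq Ce^{-c\tau}$. This follows by integrating by parts in $\xi$ (or estimating $(1+\vert x\vert^2)^2 g_\tau$ via four derivatives of the symbol), since on the annulus each derivative of $e^{-\tau\vert\xi\vert^2}$ costs at most a polynomial in $\tau$, which the factor $e^{-\tau/8}$ absorbs.

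With this estimate in hand, both inequalities reduce to scalar Young-type inequalities in time:
\[ \|\Delta_j\mathcal{L}(u)(t)\|_q\leq C\int_0^t 2^{2j}e^{-c(t-s)2^{2j}}\|\Delta_j u(s)\|_q\ds .\]
For the $L^1$ endpoint, integrate in $t$ and use Fubini together with $\int_0^{\infty}2^{2j}e^{-c\tau 2^{2j}}\,d\tau=1/c$. This gives $\int_0^\infty\|\Delta_j\mathcal{L}(u)\|_q\dt\leq C\int_0^\infty\|\Delta_j u\|_q\ds$, and summing over $j$ gives the first inequality; the interchange of $\sum_j$ and $\int\dt$ is justified by monotone convergence. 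For the $L^\infty$ endpoint, bound $\|\Delta_j u(s)\|_q\leq\sup_s\|u(s)\|_{\dot B^0_{q,\infty}}$ and use the same time integral to get $\sup_t\|\Delta_j\mathcal{L}(u)(t)\|_q\leq C\sup_s\|u(s)\|_{\dot B^0_{q,\infty}}$ uniformly in $j$. That is the second inequality, reading the right-hand side norm as $\dot B^0_{q,\infty}$; the weaker statement with $\dot B^0_{q,1}$ on the right follows since $\dot B^0_{q,1}\subset\dot B^0_{q,\infty}$. The restriction $1<q<\infty$ is not needed for these estimates.

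The only genuinely technical point is the kernel bound $\|g_\tau\|_1\leq Ce^{-c\tau}$, uniformly in $\tau\geq 0$. Everything else is bookkeeping. One should also check that $\mathcal{L}(u)$ is indeed realized as $\sum_j\mathcal{L}(\Delta_j u)$ in $\mathcal{S}'$. This holds because the series converges absolutely in $L^1_tL^q$ (respectively in $L^\infty_t\dot B^0_{q,\infty}$ in the weak sense), and the stated conditions on the Besov indices ($\gamma=0<3/q$) ensure realization.
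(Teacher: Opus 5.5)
Your proposal is correct and follows essentially the paper's argument: you commute $\Delta_j$ with $\mathcal{L}$, bound each dyadic block by a time convolution against a kernel whose time integral is independent of $j$, then use Fubini for the $L^1$ endpoint and a uniform sup bound for the $L^\infty$ endpoint, reading the right-hand side of the second inequality as $\dot B^0_{q,\infty}$, which is also what the paper's proof actually establishes. The only difference is that you use the sharper localized bound $C2^{2j}e^{-c(t-s)2^{2j}}$ (requiring the kernel estimate $\|g_\tau\|_1\leq Ce^{-c\tau}$). The paper gets by with the cruder majorant $C\min\bigl(2^{j}(t-s)^{-1/2},\,2^{-j}(t-s)^{-3/2}\bigr)$, which comes directly from the $L^1$ norms of $\nabla e^{\tau\Delta}$ and $\nabla^3 e^{\tau\Delta}$ together with Bernstein's inequality; its time integral is also finite and independent of $j$.
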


\begin{proof} Using the Littlewood\ddh Paley decomposition, we see that
\[ \int_0^{+\infty} \| \mathcal{L}(u)(t,.)\|_{\dot B^0_{q,1} }   \dt =\int_0^{+\infty} \sum_{j\in \mathbb{Z}} \| \Delta_j\mathcal{L}(u)(t,.)\|_q \dt = \sum_{j\in \mathbb{Z}}   \int_0^{+\infty} \|\mathcal{L}(\Delta_j u)(t,.)\|_q \dt \] 
and \[\sup_{0<t<+\infty}  \| \mathcal{L}(u)(t,.)\|_{\dot B^0_{q,\infty} } = \sup_{0<t<+\infty}  \sup_{j\in \mathbb{Z}} \|\Delta_j\mathcal{L}(u)(t,.)\|_q  = \sup_{j\in \mathbb{Z}} \sup_{0<t<+\infty}  \|\mathcal{L}(\Delta_j u)(t,.)\|_q .\]  
We then check that 
\[ \int_0^{+\infty} \|\mathcal{L}(\Delta_j u)(t,.)\|_q \dt \leq C_q  \int_0^{+\infty} \| \Delta_j u(t,.)\|_q \dt  \] and 
\[ \sup_{0<t<+\infty}   \|\mathcal{L}(\Delta_j u)(t,.)\|_q \leq C_q  \sup_{0<t<+\infty}  \|\Delta_j u(t,.)\|_q .\]    We write 
\[   \|\mathcal{L}(\Delta_j u)(t,.)\|_q \leq \int_0^t  \|e^{(t-s)\Delta}\Delta\Delta_j u(s,.)\|_q\ds\leq C \int_0^t \min(\frac{2^{j}}{\sqrt{t-s}}, \frac {2^{-j}}{(t-s)^{3/2}}) \|\Delta_ju(s,.)\|_q\, ds\]
We conclude by noticing that
\[ \int_s^{+\infty} \min(\frac{2^{j}}{\sqrt{t-s}}, \frac {2^{-j}}{(t-s)^{3/2}}) \,dt=\int_0^{+\infty} \min(\frac 1{\sqrt t}, \frac 1{t^{3/2}})\dt<+\infty \]
and 
\[ \int_{-\infty}^t \min(\frac{2^{j}}{\sqrt{t-s}}, \frac {2^{-j}}{(t-s)^{3/2}}) \,ds=\int_0^{+\infty} \min(\frac 1{\sqrt s}, \frac 1{s^{3/2}})\ds<+\infty.\tag*{\qedhere} \]
\end{proof}
 
\noindent{\bf Remark:}  The regularity index $\sigma=0$ in $\dot B^{\sigma}_{q,r}$  can easily be replaced by    $\sigma\leq\frac 3 q $ ($r=1$) or $\sigma<\frac 3 q $ ($r=+\infty$), since
\[ \|\mathcal{L}(u)\|_{\dot B^{\sigma}_{q,r}}= \|(-\Delta)^{\frac\sigma 2}\mathcal{L}(u)\|_{\dot B^{\sigma}_{q,r}} =\|\mathcal{L}((-\Delta)^{\frac\sigma 2}u)\|_{ \dot B^{\sigma}_{q,r}}\]
so that
\[ \int_0^{+\infty} \|\mathcal{L}(u)(t,.)\|_{\dot B^{\sigma}_{q,1}} \dt \leq C_q  \int_0^{+\infty} \|u(t,.)\|_{ \dot B^{\sigma}_{q,r}} \dt \] and 
\[ \sup_{0<t<+\infty}  \|\mathcal{L}(u)(t.)\|_{\dot B^{\sigma}_{q,\infty}}  \leq C_q \sup_{0<t<+\infty}  \|u(t,.)\|_{\dot B^{\sigma}_{q,\infty}} . \] 

\subsection*{Product laws for $\dot B^{\frac 1 2}_{2,1}$} We are interested in multiplying a function $w\in \dot B^{\frac 1 2}_{2,1}$ by a function $u$ in $\dot B^{\frac 3 2}_{2,1}$ or in $L^{3/2,\infty}$. Multiplication by $w$ has for effect of loosing one derivative on $u$.

\noindent a) We have \[ \|uw\|_{\dot B^{1/2}_{2,1}}\leq C \|u\|_{\dot B^{3/2}_{2,1}}\|w\|_{\dot B^{1/2}_{2,1}}:\] just use paraproducts and write \[ (\sum_{k\leq j+3} \|\Delta_k u\|_\infty ) \|\Delta_j v\|_2\leq \|u\|_{\dot B^0_{\infty, 1}} \|\Delta_j v\|_2\leq C  \|u\|_{\dot B^{3/2}_{2, 1}} \|\Delta_j v\|_2\] and 
 \[ (\sum_{k\leq j-4} \|\Delta_k v\|_\infty ) \|\Delta_j u\|_2\leq  C2^{j} \|v\|_{\dot B^{-1}_{\infty, 1}} \|\Delta_j u\|_2\leq C2^j  \|v\|_{\dot B^{1/2}_{2, 1}} \|\Delta_j u\|_2\]
 
 \noindent b)  We have \[ \|uw\|_1\leq C \|u\|_{L^{3/2,\infty}}\|w\|_{\dot B^{1/2}_{2,1}}:\] just write $\dot B^{1/2}_{2,1}\subset L^{3,1}$.

\subsection*{Mild solutions in $L^1((0,+\infty), \dot B^{\frac 3 2}_{2,1})$}
If $u_0\in \dot B^{-\frac 1 2}_{2,1}$ then $e^{t\Delta}u_0\in \mathbb{Y}=L^1((0,+\infty), \dot B^{\frac 3 2}_{2,1})$. Indeed, \[ \|\Delta_j e^{t\Delta} u_0\|_2\leq C \min(1, \frac 1{t^22^{4j}}) \|\Delta_ju_0\|_2\] so that
\[ 2^{3j/2} \int_0^{+\infty} \|e^{t\Delta}\Delta_j u_0\|_2\dt\leq C2^{-j/2} \|\Delta_j u_0\|_2\int_0^{+\infty}  \min(2^{2j}t,\frac 1{2^{2j}t}) \frac{\dt}t =2C2^{-j/2} \|\Delta_j u_0\|_2.
\]:

\begin{proposition}\label{prop2} Let
\[B_\epsilon(u,v)=  \int_0^t e^{(t-s)\Delta}\Div \left(u \int_0^s   e^{\epsilon(s-\sigma)\Delta} \vN v\dsig \right) \ds,\] where $\epsilon\geq 0$. Then we have the inequality
\begin{equation} \|B_\epsilon(u,v)\|_{L^1((0,+\infty), \dot B^{\frac 3 2}_{2,1})}\leq C_0 \|u\|_{L^1((0,+\infty), \dot B^{\frac 3 2}_{2,1})} \|v\|_{L^1((0,+\infty), \dot B^{\frac 3 2}_{2,1})} \end{equation} where $C_0$ doesn't depend on $\epsilon$.
\end{proposition}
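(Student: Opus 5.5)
We bound $B_\epsilon(u,v)$ in three steps, each estimate uniform in $\epsilon\geq 0$: first the inner time integral, then the product, then the outer heat operator through the maximal regularity lemma.

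\emph{Step 1: the inner integral.} Set
\[ W(s,.)=\int_0^s e^{\epsilon(s-\sigma)\Delta}\vN v(\sigma,.)\dsig .\]
Since $e^{\epsilon\tau\Delta}$ is a convolution with a probability density, it is a contraction on every $\dot B^{1/2}_{2,1}$ (each $\Delta_j$ commutes with it). Hence, for every $s>0$,
\[ \|W(s,.)\|_{\dot B^{1/2}_{2,1}}\leq \int_0^{+\infty}\|\vN v(\sigma,.)\|_{\dot B^{1/2}_{2,1}}\dsig\leq C\|v\|_{L^1\dot B^{3/2}_{2,1}} .\]
So $W\in L^\infty((0,+\infty),\dot B^{1/2}_{2,1})$ with a bound independent of $\epsilon$. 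This is where the absence of $\epsilon$-dependence comes from: we never use smoothing of $e^{\epsilon(s-\sigma)\Delta}$, only its contractivity.

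\emph{Step 2: the product.} By product law a),
\[ \|u(s)W(s)\|_{\dot B^{1/2}_{2,1}}\leq C\|u(s)\|_{\dot B^{3/2}_{2,1}}\|W(s)\|_{\dot B^{1/2}_{2,1}} .\]
Integrating in $s$ gives
\[ \|uW\|_{L^1\dot B^{1/2}_{2,1}}\leq C\|u\|_{L^1\dot B^{3/2}_{2,1}}\|v\|_{L^1\dot B^{3/2}_{2,1}} .\]

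\emph{Step 3: the outer operator.} We write
\[ B_\epsilon(u,v)=\int_0^t e^{(t-s)\Delta}\Delta\,\bigl(\Delta^{-1}\Div(uW)\bigr)\ds=\mathcal{L}\bigl(\Delta^{-1}\Div(uW)\bigr).\]
The operator $\Delta^{-1}\Div$ is a Fourier multiplier homogeneous of degree $-1$, so on each dyadic block $\|\Delta_j\Delta^{-1}\Div f\|_2\leq C2^{-j}\|\Delta_j f\|_2$. It therefore maps $\dot B^{1/2}_{2,1}$ into $\dot B^{3/2}_{2,1}$ boundedly. Applying the Lemma in its $L^1$ form with the Remark ($q=2$, $\sigma=3/2=3/q$, $r=1$, allowed since $\sigma\leq 3/q$) yields
\[ \|B_\epsilon(u,v)\|_{L^1\dot B^{3/2}_{2,1}}\leq C\|\Delta^{-1}\Div(uW)\|_{L^1\dot B^{3/2}_{2,1}}\leq C\|uW\|_{L^1\dot B^{1/2}_{2,1}} ,\]
which combined with Step 2 gives the stated inequality.

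\emph{Main obstacle.} The delicate point is the endpoint behaviour. We need the $L^1$-in-time maximal regularity at the top index $\sigma=3/2=3/q$ in $\dot B^{3/2}_{2,1}$, and we need the product law a) at the endpoint. We must also check that $\Delta^{-1}\Div(uW)$ is a legitimate element of the homogeneous Besov space, i.e.\ that the Littlewood\ddh Paley series converges in $\mathcal{S}'$ modulo nothing. The Lemma's proof, done blockwise, handles all dyadic pieces uniformly, so we would simply run the dyadic argument directly on $\Delta_j B_\epsilon$. The bound is
\[ \|\Delta_j B_\epsilon(u,v)(t)\|_2\leq C\int_0^t 2^{j}e^{-c(t-s)2^{2j}}\|\Delta_j(uW)(s)\|_2\ds .\]
Integrating in $t$ gives $\int_0^{+\infty}\|\Delta_j B_\epsilon(u,v)(t)\|_2\dt\leq C2^{-j}\int_0^{+\infty}\|\Delta_j(uW)(s)\|_2\ds$. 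Multiplying by $2^{3j/2}$ and summing over $j$ then reproduces the estimate without invoking $\Delta^{-1}\Div$ globally.
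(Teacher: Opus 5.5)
Your proof is correct and is essentially the paper's argument. Contractivity of $e^{\epsilon(s-\sigma)\Delta}$ gives an $\epsilon$-independent $L^\infty\dot B^{1/2}_{2,1}$ bound on the inner integral, product law a) puts $uW$ in $L^1\dot B^{1/2}_{2,1}$, and writing $B_\epsilon(u,v)=\mathcal{L}(\Delta^{-1}\Div(uW))$ lets the $L^1$ endpoint maximal regularity (Lemma plus Remark) conclude. Your closing blockwise bound with the factor $2^{j}e^{-c(t-s)2^{2j}}$ is just the Lemma's proof inlined, and it is a tidy way to avoid treating $\Delta^{-1}\Div$ globally.
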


We now check that the bilinear operators $B_\epsilon$, $\epsilon\geq 0$, are equicontinuous on  $L^1((0,+\infty), \dot B^{\frac 3 2}_{2,1})$

\begin{proof} Let   $w_\epsilon=   \int_0^t   e^{\epsilon(t-s)\Delta} \vN v\ds  $. We have
    \[ \|w_\epsilon\|_{\dot B^{\frac 1 2}_{2,1}}\leq  \int_0^t   \| \vN v\|_{\dot B^{\frac 1 2}_{2,1}}\ds  \]
    so that
    \[ \|w_\epsilon\|_{L^\infty(\dot B^{\frac 1 2}_{2,1})}\leq   C \|  v\|_{L^1(\dot B^{\frac 3 2}_{2,1})} ,\]
    \[ \|u w_\epsilon\|_{L^1(\dot B^{\frac 1 2}_{2,1})}\leq   C \|  u\|_{L^1(\dot B^{\frac 3 2}_{2,1})} \|  v\|_{L^1(\dot B^{\frac 3 2}_{2,1})} \] and 
    \[ \|\frac 1 \Delta \Div(u w_\epsilon)\|_{L^1(\dot B^{\frac 3 2}_{2,1})}\leq   C \|  u\|_{L^1(\dot B^{\frac 3 2}_{2,1})} \|  v\|_{L^1(\dot B^{\frac 3 2}_{2,1})} .\] 
    As $B_\epsilon(u,v)=\int_0^{+\infty} e^{(t-s)\Delta} \Delta \left(\frac 1 \Delta \Div(u w_\epsilon)\right) \ds$, we conclude by maximal regularity that $B_\epsilon(u,v)$ is well controlled in $L^1((0,+\infty), \dot B^{3/2}_{2,1})$. 
    
    \subsection*{The case $u_0\in  L^{3/2,1}$}
   Let $u_0\in L^{3/2,1}$. As   $ L^{3/2,1}\subset \dot B^{-\frac 1 2}_{2,\infty}$, we already know that \[e^{t\Delta}u_0\in L^1((0,+\infty), \dot B^{\frac 3 2}_{2,1}).\] Moreover, as $L^{3/2,1}\subset L^{3/2,\infty}$, we know that \[e^{t\Delta}u_0\in L^\infty((0,+\infty), L^{3/2,\infty}).\] We may end the proof of Theorem \ref{theo1} by proving that the bilinear operators $B_\epsilon$, $\epsilon\geq 0$, are equicontinuous on  $L^1((0,+\infty), \dot B^{\frac 3 2}_{2,1})\cap L^\infty((0,+\infty), L^{3/2,\infty}). $ Let 
 \[  \|u\|_{\mathbb{Y}, 1}=\|u\|_{L^1((0,+\infty), \dot B^{\frac 3 2}_{2,1})} \text{ and }   \|u\|_{\mathbb{Y}, \infty}=\|u\|_{L^\infty((0,+\infty), L^{\frac 3 2,\infty})}.\] We already know that
 \[ \| B_\epsilon(u,v)\|_{\mathbb{Y},1}\leq C_0 \| u\|_{\mathbb{Y},1} \| v\|_{\mathbb{Y},1}.\] Moreover, we have
 \[ \|uw_\epsilon\|_1\leq C \|u\|_{L^{3/2,\infty}}\|w_\epsilon\|_{\dot B^{1/2}_{2,1}}\leq C' \|u\|_{L^{3/2,\infty}} \| v\|_{\mathbb{Y},1}.\] 
 We check that
 \[ \|\int_0^t e^{(t-s)\Delta} \Div f\ds\|_{L^{3/2,\infty}}\leq \|f\|_{L^\infty((0,+\infty), L^1)}:\]  we have \[ \|e^{(t-s)\Delta} \Div f \|_1\leq C \frac 1{(t-s)^{1/2}} \|f(s,.)\|_1\] and
  \[ \|e^{(t-s)\Delta} \Div f \|_3\leq C \frac 1{(t-s)^{3/2}} \|f(s,.)\|_1\] 
  so that, for every $A>0$, (integrating for $s<t-A$ and $s>t-A$)
$\int_0^t e^{(t-s)\Delta} \Div f\ds= I_A+J_A$ with \[\|I_A\|_1\leq C \sqrt A \|f\|_{L^\infty L^1}\text{ and }\|I_A\|_3\leq C \frac 1{ \sqrt A} \|f\|_{L^\infty L^1}.\] As it is true for every $A>0$, we find the control on  $   \|\int_0^t e^{(t-s)\Delta} \Div f\ds\|_{L^{3/2,\infty}}$. This gives 
 \[ \| B_\epsilon(u,v)\|_{\mathbb{Y},\infty}\leq C_0 \| u\|_{\mathbb{Y},\infty} \| v\|_{\mathbb{Y},1}. \tag*{\qedhere}\] 
 \end{proof}
 
 \noindent {\bf Remark: } We couldn't conclude directly by writing  $\int_0^t e^{(t-s)\Delta} \Div f\ds= \int_0^t e^{(t-s)\Delta}\Delta (\frac 1 \Delta \Div f)\ds$ with  $\frac 1 \Delta \Div f \in L^\infty L^{3/2,\infty}$, as the maximal regularity for the heat kernel is not true in $L^\infty L^{3/2,\infty}$. This is the same problem as for the Navier\ddh Stokes equations in $L^\infty L^{3,\infty}$ (Kozono and Nakao \cite{Kozo96}, Meyer \cite{Meye99}).
 
    \end{document}